\let\oldsection=\section
\renewcommand{\@seccntformat}[1]{\bf\@nameuse{the#1}.\quad}
\renewcommand\section{\@startsection{section}{1}%
            \z@{.7\linespacing\@plus\linespacing}{.5\linespacing}%
            {\normalfont\bfseries \boldmath}}
\renewcommand\subsection{\@startsection{subsection}{2}%
            \z@{.5\linespacing\@plus.7\linespacing}{-.5em}%
            {\normalfont\bfseries \boldmath}}
\renewcommand\subsubsection{\@startsection{subsubsection}{3}%
            \z@{.3\linespacing\@plus.5\linespacing}{-.5em}%
            {\normalfont\bfseries \boldmath}}
\theoremstyle{plain}
\newtheorem{theorem}{Theorem}
\newtheorem{lemma}[theorem]{Lemma}
\newtheorem{proposition}[theorem]{Proposition}
\newtheorem{conjecture}{Conjecture}
\theoremstyle{definition}
\theoremstyle{definition}
\numberwithin{equation}{subsection}
\newcounter{listequation}
\def\note#1{{\small\tt <<#1>>}}  
\def\note#1{}              
\def\i{{\mathbf i}}
\def\:{\colon}
\def\phi{\varphi}
\def\epsilon{\varepsilon}
\def\:{\colon}
\def\ft{{\mathfrak t}}
\def\cat{\operatorname{cat}}
\def\SO{\operatorname{SO}}
\def\Spin{\operatorname{Spin}}
\def\Sp{\operatorname{Sp}}
\def\O{\mathcal O}
\begin{document}
\title[The L-S category of simply connected compact Lie groups]{Distinguished orbits and the L-S category of\\ simply connected compact Lie groups}

\author{Markus Hunziker}
\address{Department of Mathematics \\
            Baylor University\\ Waco, Texas  }
\email{\tt \{Markus\underline{\ }Hunziker, Mark\underline{\ }Sepanski\}@baylor.edu}

\author{Mark R. Sepanski}

\date{\today}

\subjclass{}

\keywords{}

\dedicatory{}

\begin{abstract} 
We show that the Lusternik-Schnirelmann category of a simple, simply connected,
compact Lie group $G$ is bounded above by the sum of the relative categories of certain distinguished conjugacy classes in $G$ corresponding to the vertices
of the fundamental alcove for the action of the affine Weyl group on the Lie algebra of a maximal torus of $G$.

\end{abstract}

\maketitle

\parskip=2pt

\section{Introduction}

\subsection{}
The (normalized) {\it Lusternik-Schnirelmann category}  of a topological space $X$,
denoted $\cat(X)$,  is the least integer $m$  such that $X$ can be covered by $m+1$ open sets that are contractible in $X$. One of the problems on Ganea's
list (\cite{G}) from 1971  asks to find the L-S category of (compact) Lie groups. 
In 1975, Singhof (\cite{S1}) proved that 
$\cat(\operatorname{SU}(n+1))=n$. For the other families of simply connected compact 
Lie groups, the answer is only known when the rank is small
(cf. \cite{IMN} for a nice summary of what is known for simply connected and non-simply connected compact Lie groups of small rank.)

\subsection{}

The purpose of this short note is to show that the L-S category of a simple, simply connected, compact Lie group $G$ is bounded above by the sum of the relative categories of certain distinguished conjugacy classes in $G$. More precisely, 
suppose $\{v_{0}, \ldots, v_{n}\}$ are 
the vertices of the fundamental alcove for the action of the affine Weyl group on the Lie algebra of a maximal torus of $G$. For $0\leq k\leq n$, let $\O_{k}$ be the conjugacy class of $\exp v_{k}$ in $G$.
Then we will show in Section~\ref{S:cover} that
$$
   \cat(G) +1 \leq \sum_{k=0}^n\ (\cat_G (\O_k) +1),
$$
where $\cat_G (\O_k)$ is the {\it relative L-S category} of
$\O_{k}$ in $G$.
(If $Y\subseteq X$ is a topological subspace, $\cat_X(Y)$
is the least integer $m$ such that there there is a covering 
of $Y$ by $m + 1$ open subsets of $X$, each contractible in $X$.)

\subsection{}
For  $G=\operatorname{SU}(n+1)$, the conjugacy classes $\O_k$
turn out to be the points of the center of $G$ and we 
recover Singhof's result that
$\cat(\operatorname{SU}(n+1))\leq n$. 
For $G=\operatorname{Sp}(n)$, we
conjecture that 
$\cat_G (\O_k)\leq \min\{k,n-k\}$ (with respect to an appropriate numbering)  which would imply that
$$
\cat(\Sp(n))\leq \left\lfloor \frac{(n+2)^2}{4}\right\rfloor-1.
$$
Thus for $n=1,2,3,4,5,6,etc.$ our conjectured upper bound 
is  $1,3,5,8, 11,15,etc.$ For $n=1,2,3$ it is known (\cite{FGST}) that 
$\cat(\Sp(n))=1,3,5$. 
Also, for $n=1,2,3,4$ it is known (\cite{IK}) that 
$\cat(\Spin(2n+1))=1,3,5,8$. 
Based on this small set of data, we conjecture that 
$\cat(\Sp(n))=\cat(\Spin(2n+1))$ and that the inequality
above is in fact an equality.
We remark that the best known lower bound is $\cat(\Sp(n))\geq n+2$ for $n\geq 3$ 
(\cite{FGST},\cite{IM}).

\subsection{\it Acknowledgment}
The authors thank John Oprea who introduced us to the problem  during his visit to Baylor University in November 2008. The authors also thank the referee
for pointing out an error in an earlier version of the paper.

\section{Notation}

\subsection{}
Let $G$ be a simple, simply connected, compact Lie group with Lie algebra $%
\mathfrak{g}$. Let $T$ be a maximal torus of $G$ with Lie algebra $%
\mathfrak{t}$. Then $\mathfrak{h}=\mathfrak{t}_{\mathbb{C}}$ is a Cartan
subalgebra of $\mathfrak{g}_{\mathbb{C}}$ with $\mathfrak{h}_{\mathbb{R}}=\i%
\mathfrak{t}$.
Write $\Delta =\Delta (\mathfrak{g}_{\mathbb{C}},\mathfrak{h})$ for the set
of roots and choose a positive system $\Delta ^{+}$ with corresponding set of
simple roots $\Pi =\{\alpha _{1},\ldots ,\alpha _{n}\}$. With respect to
this system, write $\alpha _{0}$ for the highest root. For the classical
Lie groups and with respect to standard notation, $\Pi $ and $\alpha _{0}$ can be taken as in the following table:%
\begin{equation*}
\begin{tabular}{l|l|l}
$G$ & $\Pi$  & $\alpha _{0}$ \\ \hline
$\operatorname{SU}(n+1)$  & $%
\{\alpha _{i}=\epsilon _{i}-\epsilon _{i+1} \mid 1\leq i\leq n\}$ & $\epsilon
_{1}-\epsilon _{n+1}$ \\ 
$\operatorname{Sp}(n)$ &   $%
\{\alpha _{i}=\epsilon _{i}-\epsilon _{i+1} \mid 1\leq i\leq n-1\}  
\bigcup  \{\alpha _{n}=2\epsilon _{n}\}%
$ & $2\epsilon _{1}$ \\ 
$\operatorname{Spin}(2n+1)$  & $%
\{\alpha _{i}=\epsilon _{i}-\epsilon _{i+1} \mid 1\leq i\leq n-1\}  
\bigcup \,\{\alpha _{n}=\epsilon _{n}\}%
$ & $\epsilon _{1}+\epsilon _{2}$\\
$\operatorname{Spin}(2n)$  & $%
\{\alpha _{i}=\epsilon _{i}-\epsilon _{i+1} \mid 1\leq i\leq n-1\}  
\bigcup \,\{\alpha _{n}=\epsilon _{n-1}+\epsilon _{n}\}%
$ & $\epsilon _{1}+\epsilon _{2}$ \\ 
\end{tabular}%
\end{equation*}

\smallskip

\subsection{}
Write $R^{\vee }$ for the coroot lattice in $\mathfrak{h}$ (which is the
same as the dual to the weight lattice in $\mathfrak{h}^{\ast }$) so that%
\begin{equation*}
R^{\vee }=\operatorname{span}_{\mathbb{Z}}\{h_{\alpha } \mid \alpha \in \Delta \}%
.
\end{equation*}%
Here $h_{\alpha }= 2u_{\alpha }/B(u_{\alpha },u_{\alpha })\in 
\mathfrak{h}_{\mathbb{R}}$ where $B(\cdot ,\cdot )$ is  the Killing
form and $u_{\alpha }\in \mathfrak{h}_{\mathbb{R}}$ is uniquely determined
by the equation $\alpha (H)=B(H,u_{\alpha })$ for all $H\in \mathfrak{h}_{%
\mathbb{R}}$. Since $G$ is simply connected, it follows that%
\begin{equation*}
\ker \left( \exp |_{\mathfrak{t}}\right) =2\pi\i R^{\vee }.
\end{equation*}

\subsection{}
The connected components of%
\begin{equation*}
\{t\in \mathfrak{t} \mid \alpha (t)\notin 2\pi\i \mathbb{Z}\text{ for }\alpha
\in \Delta \} 
\end{equation*}%
are called {\it alcoves}. Write $W=W(G,\mathfrak{t})$ for the Weyl group of $G$
with respect to $\mathfrak{t}$ viewed as acting on $\mathfrak{t}$ (and
extended to $\mathfrak{h}$ as needed). The {\it affine Weyl group}, $\widehat{W}$%
, is the group generated by the transformations of $\mathfrak{t}$ of the
form $t\mapsto wt+z$ for $w\in W$ and $z\in \ker \left( \exp |_{%
\mathfrak{t}}\right) $. It acts acts simply transitively on the set of
alcoves. The \emph{fundamental alcove}, $A_{0}$, is the alcove given by%
\begin{eqnarray*}
A_{0} &=&\{t=\i H\in \mathfrak{t} \mid 0<\alpha (H)<2\pi \text{ for }\alpha \in
\Delta ^{+}\} \\
&=&\{t=\i H\in \mathfrak{t} \mid \alpha _{0}(H)<2\pi \text{ and }0<\alpha
_{j}(H)\text{ for }1\leq j\leq n\}.
\end{eqnarray*}%
The closure of the fundamental alcove, $\overline{A}_{0}$,
is a fundamental domain for the $\widehat{W}$-action (cf. \cite[Thm.~4.8]{Hum}).
For $G=\Sp(2)$, the roots and the fundamental alcove are shown in Fig.~\ref{F:roots}.

\section{Cells}

\subsection{}
Define $v_{0}=0 \in \ft$ and for $1\leq k\leq n$, define $v_{k}\in \ft$ by the equations
$$
\alpha _{j}(v_{k})=
\begin{cases}
2\pi\i  & \text{if } j=0 \\ 
0 &  \text{if } 1\leq j\leq n \text{ and } j\neq k.
\end{cases}
$$
Then $\{v_{0},\ldots,v_{n}\}$ is the set of vertices of the $n$-simplex $\overline{A}_{0}$. 
Notice that if we write $\alpha _{0}=\sum_{j=1}^{n}m_{j}\alpha _{j}$ with $%
m_{j}\in \mathbb{N}$, we get $2\pi\i =\alpha
_{0}(v_{k})=\sum_{j=1}^{n}m_{j}\alpha _{j}(v_{k})=m_{k}\alpha _{k}(v_{k})$.
\ Therefore,
\begin{equation*}
\alpha _{k}(v_{k})=\frac{2\pi\i }{m_{k}}\quad \text{for } 1\leq k\leq n.
\end{equation*}%
(For classical $G$, the $m_{k}\in \{1,2\}$; however, for
exceptional $G$, the $m_{k}$ can be as large as $6$.)

\begin{figure}[ht]
\centering
\begin{pspicture}(-5,-2)(5,3)
$
\psline[linewidth=1pt]{->}(-3,0)(-1,0)
\psline[linewidth=1pt]{->}(-3,0)(-2,1)
\psline[linewidth=1pt]{->}(-3,0)(-3,2)
\psline[linewidth=1pt]{->}(-3,0)(-4,1)
\psline[linewidth=1pt]{->}(-3,0)(-5,0)
\psline[linewidth=1pt]{->}(-3,0)(-4,-1)
\psline[linewidth=1pt]{->}(-3,0)(-3,-2)
\psline[linewidth=1pt]{->}(-3,0)(-2,-1)
\uput[r](-1,0){\alpha_{0}}
\uput[u](-3,2){\alpha_{2}}
\uput[d](-1.7,-.9){\alpha_{1}}
\pspolygon*[linecolor=lightgray](3,0)(4,0)(4,1)(3,0)
\psline[linewidth=.3pt](1,0)(5,0)
\psline[linewidth=.3pt](1,1)(5,1)
\psline[linewidth=.3pt](1,2)(5,2)
\psline[linewidth=.3pt](1,-1)(5,-1)
\psline[linewidth=.3pt](1,-2)(5,-2)
\psline[linewidth=.3pt](1,-2)(1,2)
\psline[linewidth=.3pt](2,-2)(2,2)
\psline[linewidth=.3pt](3,-2)(3,2)
\psline[linewidth=.3pt](4,-2)(4,2)
\psline[linewidth=.3pt](5,-2)(5,2)
\psline[linewidth=.3pt](1,2)(5,-2)
\psline[linewidth=.3pt](3,2)(5,0)
\psline[linewidth=.3pt](1,0)(3,-2)
\psline[linewidth=.3pt](1,-2)(5,2)
\psline[linewidth=.3pt](3,-2)(5,0)
\psline[linewidth=.3pt](1,0)(3,2)
$
\end{pspicture}
\caption{Roots and alcoves for $\operatorname{Sp}(2)$}
\label{F:roots}
\end{figure}

\subsection{}
Define
\begin{equation*}
F_{0}=\{t=\i H\in \mathfrak{t} \mid \alpha _{0}(t)=2\pi\i \text{ and }0\leq
\alpha _{j}(H)\text{ for }1\leq j\leq n\} 
\end{equation*}%
and for $1\leq k\leq n$,
\begin{equation*}
F_{k}=\{t=\i H\in \mathfrak{t} \mid \alpha _{0}(H)\leq 2\pi \text{, }0\leq
\alpha _{j}(H)\text{ for }1\leq j\leq n\text{ with }j\neq k\text{, and }%
0=\alpha _{k}(t)\} 
\end{equation*}%
Then $\{F_{0},\ldots,F_{n}\}$ is the set of faces of $\overline{A}_{0}$. 
For $0\leq k \leq n$, we will call $F_{k}$ the \emph{face opposite to 
}$v_{k}$. In the following, we will write 
$r_{k}\in \widehat{W}$ for the reflection across $F_{k}$. 
Explicitly, $r_{0}(t)=t-(\alpha _{0}(t)-2\pi
i)h_{\alpha _{0}}$ and $r_{k}(t)=t-\alpha _{k}(t)h_{\alpha _{k}}$
for $1\leq k\leq n$.

\subsection{}
For $0\leq k\leq n$, let $\widehat{W}_{k}$ be the stabilizer of $v_{k}$,%
\begin{equation*}
\widehat{W}_{k}=\{w \in \widehat{W} \mid w (v_{k})=v_{k}\}. 
\end{equation*}

\begin{lemma}
\label{lem 1a} For  $0\leq k\leq n$, the group
$\widehat{W}_{k}$ is generated
by $\{r_{j} \mid 0\leq j\leq n$ and $j\neq k\}$ and%
\begin{equation*}
\{\text{alcoves $A$ such that $v_{k}\in \overline{A}$}\}=\{w (A_{0}) \mid w
\in \widehat{W}_{k}\}. 
\end{equation*}
\end{lemma}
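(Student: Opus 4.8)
The plan is to exploit the fact that $\overline{A}_0$ is a fundamental domain for the $\widehat W$-action (Humphreys, Thm.~4.8, cited above) and that $\widehat W$ acts simply transitively on the set of alcoves. Since $v_k$ is a vertex of $\overline{A}_0$ and each $r_j$ with $j\neq k$ is the reflection across the face $F_j$, which contains $v_k$ (the face $F_k$ is the \emph{only} face not containing $v_k$), each such $r_j$ fixes $v_k$ and hence lies in $\widehat W_k$. Thus the subgroup $\langle r_j \mid j\neq k\rangle$ is automatically contained in $\widehat W_k$, and the content of the lemma is the two reverse inclusions: every element of $\widehat W_k$ is a product of these reflections, and every alcove touching $v_k$ is the image of $A_0$ under such an element.

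\smallskip

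\noindent\textbf{Step 1 (Reduction to a connectivity/gallery statement).} First I would set $\widehat W' = \langle r_j \mid 0\le j\le n,\ j\neq k\rangle \subseteq \widehat W_k$ and consider the set $\mathcal A_k$ of alcoves $A$ with $v_k\in\overline A$. I would show the orbit $\{w(A_0)\mid w\in\widehat W'\}$ equals $\mathcal A_k$; this simultaneously yields both nontrivial assertions. The key geometric observation is that two alcoves sharing the vertex $v_k$ that are \emph{adjacent} (share a common wall) are related by a reflection in one of the walls through $v_k$, and any such wall is a $\widehat W$-translate of some $F_j$ with $j\neq k$, whose corresponding reflection lies in $\widehat W'$ after conjugating back.

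\smallskip

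\noindent\textbf{Step 2 (Gallery connectivity near a vertex).} The heart of the argument is to prove that the alcoves in $\mathcal A_k$ are connected by galleries that never leave the star of $v_k$: given any $A\in\mathcal A_k$, there is a sequence $A_0 = B_0, B_1,\ldots, B_m = A$ of alcoves, each containing $v_k$ in its closure, with consecutive ones adjacent across a wall through $v_k$. This is a standard but essential fact about the local structure of the Coxeter complex of $\widehat W$ near a point: the link of $v_k$ is itself a finite Coxeter complex (for the stabilizer $\widehat W_k$, which is a finite reflection group since $v_k$ is a special point in the sense that the walls through it are the translates of the $F_j$, $j\neq k$). I would argue by induction on the gallery distance from $A_0$ to $A$ within the whole apartment, checking that a minimal gallery can be chosen to stay in the star of $v_k$ (one removes any wall-crossing not passing through $v_k$ by a standard convexity/folding argument on the Coxeter complex).

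\smallskip

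\noindent\textbf{Step 3 (Translating galleries into words in the $r_j$).} Once connectivity within the star is established, I would read off the result: crossing from $B_i$ to $B_{i+1}$ corresponds to left-multiplication by a reflection $s_i$ in a wall through $v_k$; writing $B_i = w_i(A_0)$ with $w_i\in\widehat W'$ by induction, the wall between $B_i$ and $B_{i+1}$ is $w_i(F_{j_i})$ for some $j_i\neq k$, so $s_i = w_i r_{j_i} w_i^{-1}$ and hence $w_{i+1} = s_i w_i = w_i r_{j_i}\in\widehat W'$. Simple transitivity then gives $A = w_m(A_0)$ with $w_m\in\widehat W'$, proving $\mathcal A_k\subseteq\{w(A_0)\mid w\in\widehat W'\}$; the reverse inclusion is immediate since every $r_j$ ($j\neq k$) fixes $v_k$ and preserves the star. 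Finally, surjectivity onto $\widehat W_k$ follows because any $w\in\widehat W_k$ sends $A_0$ to some alcove in $\mathcal A_k = \{u(A_0)\mid u\in\widehat W'\}$, so $w(A_0)=u(A_0)$ forces $w=u\in\widehat W'$ by simple transitivity.

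\smallskip

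\noindent The main obstacle I expect is Step 2: rigorously showing that one can connect two chambers sharing the vertex $v_k$ by a gallery confined to the star of $v_k$. The clean way to handle this is to invoke the general theory of Coxeter complexes (the link of a point is a Coxeter complex for its stabilizer, and chambers in a Coxeter complex are gallery-connected within any residue), rather than to reprove it by hand; the identification of $\widehat W_k$ as the finite reflection group generated by the $r_j$, $j\neq k$, is exactly the statement that the walls through $v_k$ are precisely the $\widehat W$-translates of the faces $F_j$ with $j\neq k$.
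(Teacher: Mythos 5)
Your argument is correct, but it is organized differently from the paper's. For the generation statement, the paper simply cites the standard fact (Humphreys, Ch.~4) that the stabilizer of a point of $\overline{A}_{0}$ is generated by the reflections in the alcove walls containing that point, and observes that $v_{k}$ lies on every face except $F_{k}$; your Steps 2--3 are essentially a sketch of the proof of that cited fact, via gallery-connectivity of the star of $v_{k}$ and the conjugation formula $s_{i}=w_{i}r_{j_{i}}w_{i}^{-1}$. Where the two proofs genuinely diverge is the second assertion about alcoves: you obtain it as a byproduct of the same gallery induction, whereas the paper avoids galleries entirely by writing an arbitrary alcove as $A=w(A_{0})$ (simple transitivity), noting that the vertices of $w(A_{0})$ are $\{w(v_{j})\}$, and using the fundamental-domain property of $\overline{A}_{0}$ to conclude that $v_{k}=w(v_{j})$ forces $j=k$ and $w\in\widehat{W}_{k}$. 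Your route is more self-contained and proves both claims at once, but its burden is exactly the point you flag: the confinement of a gallery to the star of $v_{k}$, which you would either have to establish by a folding/convexity argument or cite from the theory of Coxeter complexes --- at which point you may as well cite the stabilizer theorem directly, as the paper does. The paper's route is shorter and pushes all the Coxeter-theoretic input into one citation, at the cost of treating the two assertions by separate arguments.
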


\begin{proof}
For the first statement, recall that it is well known (cf. \cite[Ch.~4]{Hum}) that
the stabilizer of any point in $\overline{A}_{0}$ is generated by the set of
reflections across the alcove faces that contain the point. In particular, 
$v_{k}$ lies on every face except $F_{k}$ and the result follows. For the
second statement, observe that any alcove $A$ can be uniquely written as $%
A=w (A_{0})$ for some $w \in \widehat{W}$. Since the vertices of 
$w (A_{0})$ are $\{w (v_{j}) \mid 0\leq j\leq n\}$, it follows that 
$v_{k}\in \overline{A}$ if and only if $v_{k}=w (v_{j})$ for some $j$, $%
0\leq j\leq n$. Since $\overline{A}_{0}$ is a
fundamental domain for the action of $\widehat{W}$, 
$v_{k}=w (v_{j})$ if and only if $k=j$ if and only if $w \in 
\widehat{W}_{k}$ as desired.
\end{proof}

\subsection{}
For $0\leq k\leq n$, define
\begin{equation*}
C_{k}=\bigcup_{w \in \widehat{W}_{k}}w \left(\,\overline{A}_{0}\backslash
F_{k}\right). 
\end{equation*}
For $G=\Sp(2)$, the cells are shown in Fig~\ref{F:cells}.

By Lemma~\ref{lem 1a} and construction, the following result is immediate.

\begin{proposition}\label{lem 2b}  
\begin{itemize}
\item[(a)] $C_{k}$ is an open neighborhood of $v_{k}$ that
is contractible to $v_{k}$ via a straight line contraction. 
\item[(b)] Each alcove wall having nonempty intersection with $C_{k}$
contains $v_{k}$. 
\item[(c)] Suppose $u_{1},u_{2}\in C_{k}$ satisfy $u_{2}=w (u_{1})$
for some $w \in \widehat{W}$. Then $v_{k}=w (v_{k})$. 
\item[(d)] $\overline{A}_{0}\subseteq \bigcup_{k=0}^{n}C_{k}$.
\end{itemize}\qed
\end{proposition}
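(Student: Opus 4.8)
The plan is to reduce all four statements to one structural observation: that $C_{k}$ is precisely the open star of the vertex $v_{k}$ in the triangulation of $\ft$ by the alcoves. First I would record that $\overline{A}_{0}\setminus F_{k}$ is exactly the union of the relative interiors of those faces of the simplex $\overline{A}_{0}$ that have $v_{k}$ as a vertex, since a face of $\overline{A}_{0}$ lies in the facet $F_{k}$ precisely when it omits $v_{k}$. Combining this with the identification in Lemma~\ref{lem 1a} of $\{w(A_{0})\mid w\in\widehat{W}_{k}\}$ with the set of \emph{all} alcoves containing $v_{k}$, and using that every face of the triangulation through $v_{k}$ is a face of some such alcove, I would conclude that $C_{k}$ equals the union of the relative interiors of all faces (of every dimension) that contain $v_{k}$.

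For (a), that $v_{k}\in C_{k}$ is clear, and openness follows from the standard fact that the open star of a vertex in a locally finite simplicial complex is open: it is the complement of the closed union of all closed simplices not containing $v_{k}$. The straight-line contraction to $v_{k}$ stays in $C_{k}$ because each closed face $\sigma\ni v_{k}$ is convex, so the half-open segment from a point of $\operatorname{relint}(\sigma)$ to the vertex $v_{k}$ remains in $\operatorname{relint}(\sigma)\subseteq C_{k}$.

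For (b) I would argue directly from the construction. The only walls meeting $\overline{A}_{0}$ are the $n+1$ hyperplanes carrying its facets, and the hyperplane carrying $F_{k}$ meets $\overline{A}_{0}$ only in $F_{k}$, which has been deleted; hence the walls meeting $\overline{A}_{0}\setminus F_{k}$ are those carrying $F_{j}$ for $j\neq k$, each of which contains $v_{k}$ because $v_{k}\in F_{j}$ for $j\neq k$. Applying $w\in\widehat{W}_{k}$, which fixes $v_{k}$ and permutes walls, shows every wall meeting $C_{k}$ contains $v_{k}$. For (d), since $\bigcap_{k}F_{k}=\emptyset$, every $t\in\overline{A}_{0}$ avoids some $F_{k}$, so $t\in\overline{A}_{0}\setminus F_{k}\subseteq C_{k}$ (taking the identity in $\widehat{W}_{k}$), which gives the asserted cover.

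The main work is (c), and it is the only place I expect real subtlety. Given $u_{2}=w(u_{1})$ with $u_{1},u_{2}\in C_{k}$, I would let $\sigma_{1},\sigma_{2}$ be the carrier faces of $u_{1},u_{2}$; since $\widehat{W}$ acts on the triangulation carrying relative interiors to relative interiors, $w(\sigma_{1})=\sigma_{2}$, and by the description of $C_{k}$ both $\sigma_{1}$ and $\sigma_{2}$ have $v_{k}$ as a vertex. Thus $w(v_{k})$ is a vertex of $\sigma_{2}$ lying in the $\widehat{W}$-orbit of $v_{k}$, as is the vertex $v_{k}$ of $\sigma_{2}$ itself. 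The decisive point, which rests on $\overline{A}_{0}$ being a fundamental domain as in Lemma~\ref{lem 1a}, is that every alcove, and hence every face, contains at most one vertex from the $\widehat{W}$-orbit of $v_{k}$; therefore the two orbit-$v_{k}$ vertices $v_{k}$ and $w(v_{k})$ of $\sigma_{2}$ must coincide, yielding $w(v_{k})=v_{k}$.
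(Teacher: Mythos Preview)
The paper itself records no proof beyond the sentence ``By Lemma~\ref{lem 1a} and construction, the following result is immediate,'' so your write-up supplies far more than the paper does, and your open-star description of $C_k$ is the right organizing idea. Parts (a), (c), and (d) are correct as written; in particular your argument for (c) is exactly the one implicit in the paper, resting on the fundamental-domain fact already used in the proof of Lemma~\ref{lem 1a} that distinct vertices $v_j$ of $\overline{A}_0$ lie in distinct $\widehat{W}$-orbits.

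There is, however, a genuine error in your argument for (b). The assertion ``the only walls meeting $\overline{A}_0$ are the $n{+}1$ hyperplanes carrying its facets'' is false. Already for $G=\Sp(2)$ the wall $\{\alpha_2=2\pi\i\}$ passes through the vertex $v_2\in\overline{A}_0$ but carries no facet of $\overline{A}_0$, and the linear wall $\{\alpha_0=0\}$ passes through $v_0$ without being a facet wall. In general any wall through a vertex of $\overline{A}_0$ meets $\overline{A}_0$, and there are typically many such walls beyond the $n{+}1$ facet hyperplanes. Consequently your list of ``walls meeting $\overline{A}_0\setminus F_k$'' is incomplete, and the deduction of (b) from it does not go through.

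The conclusion of (b) is nevertheless true, and the repair fits cleanly into your open-star framework. The set of walls through a point is constant on each open face of the alcove decomposition (equivalently, every wall is a subcomplex of this triangulation). Hence if a wall $H$ meets $C_k$, it meets $\sigma^\circ$ for some face $\sigma$ with $v_k\in\sigma$, so $\sigma\subseteq H$ and in particular $v_k\in H$. Replacing your first two sentences for (b) with this observation gives a correct proof.
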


\begin{figure}[ht]
\centering
\begin{pspicture}(-2,-2)(2,2.5)
$
\pspolygon*[linecolor=lightgray](-6,-1)(-4,-1)(-4,1)(-6,1)(-6,-1)
\psline[linewidth=.3pt](-7,0)(-3,0)
\psline[linewidth=.3pt](-7,1)(-3,1)
\psline[linewidth=.3pt](-7,2)(-3,2)
\psline[linewidth=.3pt](-7,-1)(-3,-1)
\psline[linewidth=.3pt](-7,-2)(-3,-2)
\psline[linewidth=.3pt](-7,-2)(-7,2)
\psline[linewidth=.3pt](-6,-2)(-6,2)
\psline[linewidth=.3pt](-5,-2)(-5,2)
\psline[linewidth=.3pt](-4,-2)(-4,2)
\psline[linewidth=.3pt](-3,-2)(-3,2)
\psline[linewidth=.3pt](-7,2)(-3,-2)
\psline[linewidth=.3pt](-5,2)(-3,0)
\psline[linewidth=.3pt](-7,0)(-5,-2)
\psline[linewidth=.3pt](-7,-2)(-3,2)
\psline[linewidth=.3pt](-5,-2)(-3,0)
\psline[linewidth=.3pt](-7,0)(-5,2)
\psline[linewidth=.7pt](-5,0)(-4,1)
\psline[linewidth=.7pt](-4,1)(-4,0)
\psline[linewidth=.7pt](-5,0)(-4,0)
\pscircle*[linewidth=.5pt](-5,0){.07}
\pspolygon*[linecolor=lightgray](0,0)(1,-1)(2,0)(1,1)(0,0)
\psline[linewidth=.3pt](-2,0)(2,0)
\psline[linewidth=.3pt](-2,1)(2,1)
\psline[linewidth=.3pt](-2,2)(2,2)
\psline[linewidth=.3pt](-2,-1)(2,-1)
\psline[linewidth=.3pt](-2,-2)(2,-2)
\psline[linewidth=.3pt](-2,-2)(-2,2)
\psline[linewidth=.3pt](-1,-2)(-1,2)
\psline[linewidth=.3pt](0,-2)(0,2)
\psline[linewidth=.3pt](1,-2)(1,2)
\psline[linewidth=.3pt](2,-2)(2,2)
\psline[linewidth=.3pt](-2,2)(2,-2)
\psline[linewidth=.3pt](0,2)(2,0)
\psline[linewidth=.3pt](-2,0)(0,-2)
\psline[linewidth=.3pt](-2,-2)(2,2)
\psline[linewidth=.3pt](0,-2)(2,0)
\psline[linewidth=.3pt](-2,0)(0,2)
\psline[linewidth=.7pt](0,0)(1,1)
\psline[linewidth=.7pt](1,1)(1,0)
\psline[linewidth=.7pt](0,0)(1,0)
\pscircle*[linewidth=.5pt](1,0){.07}
\pspolygon*[linecolor=lightgray](5,0)(7,0)(7,2)(5,2)(5,0)
\psline[linewidth=.3pt](3,0)(7,0)
\psline[linewidth=.3pt](3,1)(7,1)
\psline[linewidth=.3pt](3,2)(7,2)
\psline[linewidth=.3pt](3,-1)(7,-1)
\psline[linewidth=.3pt](3,-2)(7,-2)
\psline[linewidth=.3pt](3,-2)(3,2)
\psline[linewidth=.3pt](4,-2)(4,2)
\psline[linewidth=.3pt](5,-2)(5,2)
\psline[linewidth=.3pt](6,-2)(6,2)
\psline[linewidth=.3pt](7,-2)(7,2)
\psline[linewidth=.3pt](3,2)(7,-2)
\psline[linewidth=.3pt](5,2)(7,0)
\psline[linewidth=.3pt](3,0)(5,-2)
\psline[linewidth=.3pt](3,-2)(7,2)
\psline[linewidth=.3pt](5,-2)(7,0)
\psline[linewidth=.3pt](3,0)(5,2)
\psline[linewidth=.7pt](5,0)(6,1)
\psline[linewidth=.7pt](6,1)(6,0)
\psline[linewidth=.7pt](5,0)(6,0)
\pscircle*[linewidth=.5pt](6,1){.07}
$
\end{pspicture}
\caption{The cells $C_0$, $C_1$, and $C_2$ for $\operatorname{Sp}(2)$}
\label{F:cells}
\end{figure}

\section{A Cover of $G$}\label{S:cover}

\subsection{}
For $0\leq k\leq n$, define
\begin{equation*}
U_{k}=\left\{ c_{g}(\exp t) \mid g\in G\text{, }t\in C_{k}\right\} 
\quad\text{and}
\quad
\mathcal{O}_{k}=\left\{ c_{g}(\exp v_{k}) \mid g\in G\right\}, 
\end{equation*}%
where $c_{g}(x)=gxg^{-1}$ for $g,x \in G$.

\begin{theorem}
\begin{itemize}
\item[(a)] $\{U_{k} \mid 0\leq k\leq n\}$ is an open cover of $G$. 
\item[(b)] $\mathcal{O}_{k}$ is a deformation retract of $U_{k}$.
\end{itemize}
\end{theorem}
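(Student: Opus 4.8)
The plan is to rely on the classical description of conjugacy in a compact connected group. Every element of $G$ is conjugate to some $\exp t$ with $t\in\mathfrak t$, and $\exp t_1$ is conjugate to $\exp t_2$ in $G$ if and only if $t_1,t_2$ lie in a common $\widehat W$-orbit; this is Weyl's theorem combined with $\ker(\exp|_{\mathfrak t})=2\pi\i R^\vee$. Consequently the orbit space of the conjugation action is identified with the fundamental domain $\overline A_0$, and I will use throughout the resulting continuous adjoint quotient $\chi\colon G\to\overline A_0$ sending $x$ to the unique point of $\overline A_0$ in the $\widehat W$-orbit attached to the conjugacy class of $x$.

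For part (a), surjectivity of the cover is immediate: writing $x\in G$ as a conjugate of $\exp t$, moving $t$ into $\overline A_0$ by $\widehat W$, and applying Proposition~\ref{lem 2b}(d) to land in some $C_k$ gives $x\in U_k$. For openness I would prove the identity $U_k=\chi^{-1}(\overline A_0\setminus F_k)$. Using that $\overline A_0$ is a fundamental domain, one checks that the $\widehat W$-orbit of a point $p\in\overline A_0$ meets $C_k$ exactly when $p\in\overline A_0\setminus F_k$; since $x\in U_k$ iff the orbit of $\chi(x)$ meets $C_k$, this yields the stated equality. As $F_k$ is a closed face of the simplex $\overline A_0$ and $\chi$ is continuous, $U_k$ is open.

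For part (b) I would define the retraction by the straight-line contraction of Proposition~\ref{lem 2b}(a), namely
\[
\widetilde H(x,s)=g\,\exp\!\big((1-s)t+s\,v_k\big)\,g^{-1},
\qquad x=g\exp(t)g^{-1},\ t\in C_k.
\]
Since $C_k$ is star-shaped about $v_k$, the map $\widetilde H$ takes values in $U_k$, satisfies $\widetilde H(\,\cdot\,,0)=\mathrm{id}$ and $\widetilde H(\,\cdot\,,1)\in\mathcal{O}_k$, and (using the representative $t=v_k\in C_k$) fixes $\mathcal{O}_k$ pointwise, so it is a strong deformation retraction onto $\mathcal{O}_k$. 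The crux of the whole argument, and the step I expect to be the main obstacle, is showing that $\widetilde H$ is well defined, i.e. independent of the representation $x=g_1\exp(t_1)g_1^{-1}=g_2\exp(t_2)g_2^{-1}$ with $t_i\in C_k$.

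To settle well-definedness, set $h=g_1^{-1}g_2$, so that $\exp t_1=h\exp(t_2)h^{-1}$. Two elements of $T$ that are conjugate in $G$ are already conjugate by $N(T)$: applying the conjugacy of maximal tori inside the connected group $Z_G(\exp t_1)^0$ to the tori $T$ and $hTh^{-1}$, I can write $h=z^{-1}n$ with $n\in N(T)$ inducing some $\bar w\in W$ on $T$ and $z\in Z_G(\exp t_1)^0$. The resulting affine element $w(t)=\bar w t+z_0$, with $z_0\in 2\pi\i R^\vee$, satisfies $w(t_2)=t_1$, so Proposition~\ref{lem 2b}(c) gives $w\in\widehat W_k$. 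A short affine computation using $w(v_k)=v_k$ then shows $w\big((1-s)t_2+sv_k\big)=(1-s)t_1+sv_k$ for all $s$, whence $n$ conjugates $\exp\big((1-s)t_2+sv_k\big)$ to $\exp\big((1-s)t_1+sv_k\big)$ (the translation $z_0$ vanishes under $\exp$). Finally, Proposition~\ref{lem 2b}(b) ensures that any root $\alpha$ with $\alpha(t_1)\in 2\pi\i\mathbb{Z}$ satisfies $\alpha(v_k)=\alpha(t_1)$, hence $\alpha\big((1-s)t_1+sv_k\big)=\alpha(t_1)\in 2\pi\i\mathbb{Z}$; therefore $\exp\big((1-s)t_1+sv_k\big)$ is central in $Z_G(\exp t_1)^0$ and is fixed by $z$. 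Combining the last two facts shows that $h$ conjugates $\exp\big((1-s)t_2+sv_k\big)$ to $\exp\big((1-s)t_1+sv_k\big)$, which is precisely the equality needed. Continuity of $\widetilde H$ then follows once one checks that $q\colon G\times C_k\to U_k$, $(g,t)\mapsto g\exp(t)g^{-1}$, is a quotient map; this is a routine point-set argument using the compactness of $G$ (realizing $q$ as the restriction of the closed map $G\times\overline{C_k}\to G$ to the saturated open subset $G\times C_k$), after which $\widetilde H$ descends from the manifestly continuous formula on $G\times C_k\times[0,1]$.
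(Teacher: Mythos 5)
Your proof is correct and follows essentially the same route as the paper's: well-definedness of the straight-line retraction is settled by factoring the conjugating element into a normalizer part and a centralizer part, handling the former via the affine Weyl group together with Proposition~\ref{lem 2b}(c), and the latter via the root-theoretic description of $Z_{G}(\exp t)^{0}$ together with Proposition~\ref{lem 2b}(b). You additionally spell out the openness of $U_{k}$ and the continuity of the homotopy, points the paper treats as automatic.
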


\begin{proof}
Since $\exp (C_{k})$ is open in $T$ and since conjugation takes the
exponential of the closure of an alcove onto $G$, part (a) is automatic. \
For part (b), we claim the deformation retract is given by $%
R_{k}:U_{k}\times I\rightarrow U_{k}$ where $I=[0,1]$ and%
\begin{equation*}
R_{k}(c_{g}(\exp t),s)=c_{g}\left( \exp \left( (1-s)t+sv_{k}\right) \right) \text{%
.} 
\end{equation*}%
It remains to see that $R_{k}$ is actually well defined.

Suppose $c_{g_{1}}(\exp t_{1})=c_{g_{2}}(\exp t_{2})$ for $g_{j}\in G$ and $%
t_{j}\in C_{k}$. Writing $c_{g_{2}^{-1}g_{1}}(\exp t_{1})=\exp t_{2}$,
there exists $h\in Z_{G}(\exp t_{2})^{0}$ so that $\widetilde{w}%
=hg_{2}^{-1}g_{1}\in N_{G}(T)$ (cf. \cite[Section 6.4]{Sep}.) Let $\Sigma
_{t_{2}}=\{\alpha \in \Delta  \mid \alpha (t_{2})\in 2\pi\i \mathbb{Z}\}$,
i.e., the set of $\alpha $ for which $t_{2}$ lies on an $\alpha $-alcove
wall. Then $Z_{G}(\exp t_{2})^{0}$ is the exponential of the direct sum of 
$\mathfrak{t}$ and all $\mathfrak{su}(2)$-triples corresponding to roots in $%
\Sigma_{t_{2}}$. Since $v_{k}$ also lies on all such $\alpha $-alcove
walls, it follows that $h\in Z_{G}(\exp \left( (1-s)t+sv_{k}\right) )^{0}$.

Setting $w=\operatorname{Ad}_{\widetilde{w}}\in W$, we have $c_{\widetilde{w}%
}(\exp t_{1})=\exp t_{2}$. Thus $\exp (wt_{1})=\exp (t_{2})$ so that $%
t_{2}=wt_{1}+z$ for some $z\in \ker \left( \exp |_{\mathfrak{t}}\right) $. 
By Proposition~\ref{lem 2b}, it follows that $v_{k}=wv_{k}+z$. Then%
\begin{eqnarray*}
c_{g_{1}}\left( \exp \left( (1-s)t_{1}+sv_{k}\right) \right)
&=&c_{g_{2}h^{-1}\widetilde{w}}\left( \exp \left( (1-s)t_{1}+sv_{k}\right)
\right) \\
&=&c_{g_{2}h^{-1}}\left( \exp \left( (1-s)wt_{1}+swv_{k}\right) \right) \\
&=&c_{g_{2}h^{-1}}\left( \exp \left( (1-s)\left( t_{2}-z\right) +s\left(
v_{k}-z\right) \right) \right) \\
&=&c_{g_{2}h^{-1}}\left( \exp \left( (1-s)t_{2}+sv_{k}-z\right) \right) \\
&=&c_{g_{2}}\left( \exp \left( (1-s)t_{2}+sv_{k}\right) \right)
\end{eqnarray*}%
and we are finished.
\end{proof}

\subsection{}
The results of the previous subsection give immediately the following main result.
\begin{theorem}
\label{thm: main}%
\begin{equation*}
\operatorname{cat}(G)+1\leq \sum_{k=0}^{n}\left( \operatorname{cat}_{G}\left( 
\mathcal{O}_{k}\right) +1\right) . 
\end{equation*}\qed
\end{theorem}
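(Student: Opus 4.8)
The plan is to deduce Theorem~\ref{thm: main} directly from the previous theorem using only the general subadditivity property of relative L-S category with respect to open covers. First I would recall the standard fact: if $X = \bigcup_{k=0}^n U_k$ is an open cover, then
\begin{equation*}
\cat(X) + 1 \leq \sum_{k=0}^n \bigl(\cat_X(U_k) + 1\bigr).
\end{equation*}
This is the natural generalization of the elementary bound $\cat(X)+1 \le \sum (\text{number of contractible pieces})$: each $U_k$ can be covered by $\cat_X(U_k)+1$ open sets that are contractible in $X$, and taking the union of these coverings over all $k$ produces an open cover of $X$ by $\sum_{k=0}^n(\cat_X(U_k)+1)$ sets, each contractible in $X$; the defining property of $\cat(X)$ then yields the displayed inequality.

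Next I would invoke part (a) of the preceding theorem, which asserts precisely that $\{U_k \mid 0 \le k \le n\}$ is an open cover of $G$, so the subadditivity bound applies with $X = G$ to give
\begin{equation*}
\cat(G) + 1 \leq \sum_{k=0}^n \bigl(\cat_G(U_k) + 1\bigr).
\end{equation*}
It then remains to replace each $\cat_G(U_k)$ by $\cat_G(\O_k)$. For this I would use part (b) of the preceding theorem: $\O_k$ is a deformation retract of $U_k$. The key point is that relative L-S category is a homotopy invariant in the appropriate sense, so a deformation retraction of $U_k$ onto $\O_k$ inside $G$ forces $\cat_G(U_k) = \cat_G(\O_k)$. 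Concretely, any open set of $G$ that is contractible in $G$ and meets $U_k$ restricts to a covering of $\O_k \subseteq U_k$, and conversely an open cover of $\O_k$ by sets contractible in $G$ can be pulled back through the deformation retraction $R_k$ to a cover of $U_k$ by sets still contractible in $G$; hence the two relative categories agree.

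I expect the main obstacle to be the bookkeeping in showing $\cat_G(U_k) = \cat_G(\O_k)$ cleanly, since relative category is defined via coverings by open subsets \emph{of the ambient space $G$} that are contractible in $G$, not by intrinsic open subsets of $U_k$ or $\O_k$. The inequality $\cat_G(U_k) \ge \cat_G(\O_k)$ is immediate because any cover of $U_k$ by $G$-contractible open sets restricts to one of the subspace $\O_k$. For the reverse inequality one must use the retraction $R_k \colon U_k \times I \to U_k$ from the previous proof to thicken a cover of $\O_k$ up to a cover of $U_k$ without destroying contractibility in $G$; this is where the homotopy invariance of relative category must be applied with some care. Since, however, the statement is flagged in the text as following \emph{immediately} from the previous theorem, I would keep this verification brief, citing the standard homotopy invariance of relative L-S category and the open-cover subadditivity rather than reproving them, and simply assemble the two displayed inequalities to obtain the claimed bound.
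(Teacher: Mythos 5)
Your proposal is correct and follows exactly the route the paper intends: the paper states the theorem as an immediate consequence of the preceding theorem (open cover by the $U_k$ plus the deformation retraction onto $\mathcal{O}_k$) and offers no further argument, so your explicit verification of the subadditivity bound and of $\cat_G(U_k)=\cat_G(\mathcal{O}_k)$ simply fills in the details the authors leave to the reader. The one point worth noting is that only the inequality $\cat_G(U_k)\leq\cat_G(\mathcal{O}_k)$ is actually needed, and it follows cleanly since the preimages under the retraction of a $G$-contractible cover of $\mathcal{O}_k$ are open in $G$ (as $U_k$ is open) and contractible in $G$ via the retracting homotopy.
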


\section{The Orbits $\mathcal{O}_{k}$}

\noindent
We present some remarks and explicit realizations for the $\mathcal{O}_{k}$
in the classical cases.

\subsection{$G=\operatorname{SU}(n+1)$}

Trivial calculations show that%
\begin{equation*}
v_{k}=\frac{2\pi\i }{n+1}(\overset{k}{\overbrace{n+1-k,\ldots ,n+1-k}}%
,\,-k,\ldots ,-k) 
\end{equation*}%
for $0\leq k\leq n$. Therefore $\exp v_{k}=e^{\frac{-2\pi\i k}{n+1}}%
\operatorname{Id}$. In particular, $\mathcal{O}_{k}=\{e^{\frac{-2\pi\i k}{n+1}}%
\operatorname{Id}\}$ and so $\operatorname{cat}\left( \mathcal{O}_{k}\right) =0$ for all $0\leq k\leq n$. Thus, Theorem~\ref{thm: main} implies 
$\operatorname{cat}(\operatorname{SU}(n+1))\leq n$,
i.e., we recover Singhof's  result \cite{S1}.

\subsection{$G=\operatorname{Sp}(n)$}


Let $\mathbb{H}$ denote the division algebra of quaternions $q=a+b{\mathbf{i}%
}+c\mathbf{j}+d\mathbf{k}$, $a,b,c,d\in \mathbb{R}$. View $\mathbb{H}^{n}$
as a right vector space and identify the set of quaternionic matrices, $%
M_{n}(\mathbb{H})$, with the set of $\mathbb{H}$-linear endomorphisms of $%
\mathbb{H}^{n}$ via standard matrix multiplication on the left. Write $\nu
:M_{n}(\mathbb{H})\rightarrow \mathbb{R}$ for the reduced norm. In
particular, if $\varphi :M_{n}(\mathbb{H})\rightarrow M_{2n}(\mathbb{C})$ is
the $\mathbb{C}$-linear injective homomorphism given by 
\begin{equation*}
\varphi (A+\mathbf{j}B)=\left( 
\begin{array}{cc}
A & B \\ 
-\bar{B} & \bar{A}%
\end{array}%
\right) 
\end{equation*}%
for $A,B\in M_{n}(\mathbb{C})$, then $\nu =\det \circ \varphi $. We then
realize $\operatorname{GL}(n,\mathbb{H})=\{g\in M_{n}(\mathbb{H}) \mid \nu (g)\not=0\}$, $%
\operatorname{SL}(n,\mathbb{H})=\{g\in M_{n}(\mathbb{H}) \mid \nu (g)=1\}$, and 
\begin{equation*}
G=\operatorname{Sp}(n)=\{g\in \operatorname{SL}(n,\mathbb{H}) \mid gg^{\ast }=I_{n}\}\text{,}
\end{equation*}%
where $g^{\ast }$ denotes the quaternionic conjugate transpose of $g$. We
also fix the maximal torus 
\begin{equation*}
T=\{\operatorname{diag}(e^{\mathbf{i}\theta _{1}},\ldots ,e^{\mathbf{i}\theta
_{n}}) \mid \theta _{j}\in \mathbb{R}\}.
\end{equation*}

With this set-up, it is straightforward to check that%
\begin{equation*}
v_{k}=\i\pi \operatorname{diag}(\overset{k}{\overbrace{1,\ldots ,1}},0,\ldots ,0)
\end{equation*}%
for $0\leq k\leq n$.  Therefore 
\begin{equation*}
\exp v_{k}=\left( 
\begin{array}{cc}
-I_{k} &  \\ 
& I_{n-k}%
\end{array}%
\right) .
\end{equation*}%
In particular, $\mathcal{O}_{0}=\{\operatorname{Id}\}$ and $\mathcal{O}_{n}=\{-%
\operatorname{Id}\}$ so that $\operatorname{cat}(\mathcal{O}_{0})=\operatorname{cat}(\mathcal{O}_{n})=0$.

The other $\mathcal{O}_{k}$ require more work, though they are easy to
identify. For this we realize the quaternionic Grassmannian of $k$-planes
in $\mathbb{H}^{n}$, ${Gr}_{k}(\mathbb{H}^{n})$, by $%
\left\{ x\in {M}_{n\times k}(\mathbb{H}) \mid \operatorname{rk}%
(x)=k\right\} $ equipped with the equivalence relation $x\sim xh$ where $%
x\in {M}_{n\times k}(\mathbb{H}^{n})$ and $h\in \operatorname{GL}(k,%
\mathbb{H})$. The following result is immediate.

\begin{lemma}
Let $1\leq k\leq n-1$ and set $d_{k}=\min \{k,n-k\}$. Then there is a diffeomorphism $\tau _{k}:\mathcal{O}%
_{k}\rightarrow {Gr}_{d_{k}}(\mathbb{H}^{n})$,%
\begin{equation*}
\mathcal{O}_{k}\cong \operatorname{Sp}(n)/\left( \operatorname{Sp}(k)\times \operatorname{Sp%
}(n-k)\right) \cong {Gr}_{d_{k}}(\mathbb{H}^{n})\text{,} 
\end{equation*}%
given by%
\begin{equation*}
\tau _{k}\left( c_{g}(\exp v_{k})\right) =g\left( 
\begin{array}{c}
I_{k} \\ 
0_{(n-k)\times k}%
\end{array}%
\right) 
\end{equation*}%
when $d_{k}=k$ and by%
\begin{equation*}
\tau _{k}\left( c_{g}(\exp v_{k})\right) =g\left( 
\begin{array}{c}
0_{k\times (n-k)} \\ 
I_{n-k}%
\end{array}%
\right) 
\end{equation*}%
when $d_{k}=n-k$.\qed
\end{lemma}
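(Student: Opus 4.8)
The plan is to realize each $\mathcal{O}_k$ as a homogeneous space for $\Sp(n)$ and then match that homogeneous space with the Grassmannian through the stated frame map. First I would compute the centralizer $Z_{G}(\exp v_k)$. Since $\exp v_k=\diag(-I_k,I_{n-k})$, conjugation by it fixes the diagonal blocks of a matrix in $M_n(\HH)$ and negates the off-diagonal blocks, so an element of $G$ commutes with $\exp v_k$ precisely when it is block diagonal; the block-diagonal elements of $\Sp(n)$ are exactly $\Sp(k)\times\Sp(n-k)$. Hence $Z_{G}(\exp v_k)=\Sp(k)\times\Sp(n-k)$. Because the conjugacy class of an element of a compact Lie group is an embedded submanifold and the orbit map $g\mapsto c_g(\exp v_k)$ induces a diffeomorphism from $G/Z_{G}(\exp v_k)$ onto the class, this already yields the middle identification $\mathcal{O}_k\cong\Sp(n)/(\Sp(k)\times\Sp(n-k))$.

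Next I would identify this quotient with $Gr_{d_k}(\HH^n)$, treating the case $d_k=k$ first. Let $\Sp(n)$ act on $Gr_k(\HH^n)$ through its action on $\HH^n$. I would verify transitivity---an orthonormal frame of any quaternionic $k$-plane extends by Gram--Schmidt over $\HH$ to an orthonormal basis of $\HH^n$, and $\Sp(n)$ permutes such bases transitively---and then compute the stabilizer of the standard plane $V_0=\mathrm{span}(e_1,\dots,e_k)$: an isometry fixing $V_0$ also fixes $V_0^{\perp}$, so it is block diagonal and lies in $\Sp(k)\times\Sp(n-k)$, and conversely. The orbit map therefore gives $\Sp(n)/(\Sp(k)\times\Sp(n-k))\cong Gr_k(\HH^n)$, and chasing it through the identification of the previous paragraph sends $c_g(\exp v_k)$ to the column span of $g\binom{I_k}{0}$, which is exactly the asserted $\tau_k$. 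The case $d_k=n-k$ is the same argument with $V_0$ replaced by $\mathrm{span}(e_{k+1},\dots,e_n)$ and the frame $\binom{0}{I_{n-k}}$; the role of $\min\{k,n-k\}$ is only to make $\tau_k$ land in the lower-dimensional of the two (diffeomorphic) Grassmannians.

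Finally I would check that the formula for $\tau_k$ is well defined and a diffeomorphism. Well-definedness is the one explicit computation: if $c_{g_1}(\exp v_k)=c_{g_2}(\exp v_k)$ then $g_2^{-1}g_1=\diag(A,D)\in\Sp(k)\times\Sp(n-k)$, whence $g_1\binom{I_k}{0}=\bigl(g_2\binom{I_k}{0}\bigr)A$ with $A\in\GL(k,\HH)$, so the two frames represent the same point of $Gr_k(\HH^n)$ under $x\sim xh$. Smoothness of $\tau_k$ and of its inverse then follow because $\tau_k$ is the $\Sp(n)$-equivariant bijection between two homogeneous spaces sharing the same isotropy group. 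I expect the only real obstacle to be bookkeeping: confirming that the centralizer of $\exp v_k$ and the $Gr_{d_k}$-stabilizer are literally the same subgroup $\Sp(k)\times\Sp(n-k)$, and that the explicit frame map is compatible with right multiplication by $\GL(d_k,\HH)$. Once these coincide, the diffeomorphism is forced and the displayed chain of isomorphisms follows.
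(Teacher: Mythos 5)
Your argument is correct and is precisely the standard reasoning the paper has in mind when it declares this lemma ``immediate'' (the paper offers no written proof beyond that): identify $\mathcal{O}_k$ with $G/Z_G(\exp v_k)$, compute $Z_G(\exp v_k)=\operatorname{Sp}(k)\times\operatorname{Sp}(n-k)$ from the block structure of $\exp v_k=\operatorname{diag}(-I_k,I_{n-k})$, and match this with the transitive $\operatorname{Sp}(n)$-action on the Grassmannian via the frame map. Nothing is missing, and the well-definedness check via $g_1\bigl(\begin{smallmatrix}I_k\\0\end{smallmatrix}\bigr)=g_2\bigl(\begin{smallmatrix}I_k\\0\end{smallmatrix}\bigr)A$ is exactly the point the equivalence relation $x\sim xh$ is designed to absorb.
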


\begin{conjecture}
$\operatorname{cat}_{\operatorname{Sp}(n)}(\mathcal{O}_{k})=d_{k}$.
\end{conjecture}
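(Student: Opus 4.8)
The plan is to treat the two inequalities $\operatorname{cat}_{\operatorname{Sp}(n)}(\mathcal{O}_k)\le d_k$ and $\operatorname{cat}_{\operatorname{Sp}(n)}(\mathcal{O}_k)\ge d_k$ separately, writing $G=\operatorname{Sp}(n)$ and $m=d_k=\min\{k,n-k\}$, and using the diffeomorphism $\mathcal{O}_k\cong {Gr}_m(\mathbb{H}^n)$ of the preceding lemma, under which a point is an orthogonal involution $\sigma\in G$ recorded by its smaller eigenspace $V=V(\sigma)\in {Gr}_m(\mathbb{H}^n)$, and $V_0$ denotes the eigenspace of the base point $\exp v_k$.

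For the upper bound the first observation is a convenient sufficient condition for a subset to be contractible in $G$: if $U\subseteq\mathcal{O}_k$ admits a continuous section $s\colon U\to G$ of the orbit map $p\colon g\mapsto c_g(\exp v_k)$, then $U\hookrightarrow G$ is null-homotopic via $(u,r)\mapsto c_{s(u)}(\exp((1-r)v_k))$, which slides $u$ to the identity through $G$. Thus it would suffice to cover ${Gr}_m(\mathbb{H}^n)$ by $m+1$ open sets carrying such sections, i.e. to bound the sectional category of $p$; but since $H^4(G)=0$, the degree-$4$ generator of $H^\ast(\mathcal{O}_k)$ lies in $\ker p^\ast$ and its top power is nonzero, so this genus is of order $m(n-m)$, far larger than $m$, and the clean bound alone is too weak. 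Instead I would stratify ${Gr}_m(\mathbb{H}^n)$ by the Schubert cells of the flag determined by $V_0$ and group them by a single integer statistic $h\colon {Gr}_m(\mathbb{H}^n)\to\{0,1,\ldots,m\}$ (for instance $h(V)=m-\dim_{\mathbb H}(V\cap V_0)$), thickening the $m+1$ level sets to open sets $U_0,\ldots,U_m$. The key step is then to contract each $U_i$ inside $G$, not inside $\mathcal{O}_k$, by simultaneously rotating the whole parallel family of planes in a level set to $V_0$ along a path in $G$ and collapsing the resulting involution along $\exp((1-r)v_k)$. It is exactly this extra room in $G$ that lets one collapse a union of many cells to a single null-homotopic piece, so that $m+1$ sets suffice rather than the $\dim+1$ required for the absolute category of the Grassmannian.

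For the lower bound one must show $\mathcal{O}_k$ cannot be covered by $m$ subsets of $G$ each contractible in $G$. The usual tool, the relative cup-length giving $\operatorname{cat}_G(A)\ge$ the length of the longest nonzero product of reduced classes of $H^\ast(G)$ restricted to $A$, is useless here: $H^\ast(\operatorname{Sp}(n))$ is exterior on generators $x_3,x_7,\ldots,x_{4n-1}$ in degrees $\equiv 3\pmod 4$, while $H^\ast({Gr}_m(\mathbb{H}^n))$ sits in degrees $\equiv 0\pmod 4$, so each generator restricts to zero and, $j^\ast$ being a ring map, so does every product. The bound must therefore come from a finer invariant. My plan is to use the Ganea characterization $\operatorname{cat}_G(\mathcal{O}_k)\ge m\iff$ the inclusion $j\colon\mathcal{O}_k\hookrightarrow G$ does not lift through the $(m-1)$-st Ganea fibration $p_{m-1}\colon G_{m-1}(G)\to G$, and to obstruct such a lift by a secondary (functional) cohomology operation built from the transgression carrying each $x_{4i-1}$ to the symplectic Pontryagin class $q_i\in H^{4i}(B\operatorname{Sp}(n))$, whose pullbacks generate $H^\ast({Gr}_m(\mathbb{H}^n))$. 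The mechanism is that, although the $q_i$ are not in the image of $j^\ast$, they are tied to the vanishing classes $x_{4i-1}$ through this transgression, and precisely $m$ of these relations should be independent on $\mathcal{O}_k$, so that the associated secondary operation survives where the primary cup-length dies.

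I expect this last step to be the main obstacle, which is surely why the statement is only conjectured. The smallest case should be checked first as a template: for $n=2$, $k=1$ one has $\mathcal{O}_1\cong {Gr}_1(\mathbb{H}^2)=\mathbb{HP}^1=S^4$ and $d_1=1$, so the upper bound is the trivial cover of $S^4$ by two disks, while the lower bound $\operatorname{cat}_{\operatorname{Sp}(2)}(S^4)\ge 1$ reduces to showing that the involution sphere $S^4\hookrightarrow\operatorname{Sp}(2)$ is essential, i.e. generates $\pi_4(\operatorname{Sp}(2))=\mathbb{Z}/2$. The difficulty in general is that this detection is unstable: $\operatorname{Sp}(n-1)\hookrightarrow\operatorname{Sp}(n)$ is highly connected, so the essential part of $j$ lives just below the stable range where secondary operations are delicate, and organizing these obstructions uniformly in $k$ and $n$ — and showing that exactly $d_k$ of them are forced — is the crux.
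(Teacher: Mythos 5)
This statement is labeled a \emph{conjecture} in the paper, and the paper does not prove it; it only establishes a conditional partial result in one direction, namely that $\operatorname{cat}_{\operatorname{Sp}(n)}(\mathcal{O}_k)\leq k$ \emph{provided} the explicit sets $\tau_k^{-1}(Y_{j,k})$ (each a copy of an $\operatorname{Sp}(n-1)$-orbit sitting inside $\operatorname{Sp}(n)$) are contractible in $\operatorname{SL}(n,\mathbb{H})$ --- and the authors state explicitly that they do not know whether this contractibility holds. Your proposal likewise does not prove the statement, and you acknowledge as much; but it is worth naming where the concrete gaps are, since your outline differs from the paper's.

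For the upper bound, your only actual mechanism for contracting a subset $U\subseteq\mathcal{O}_k$ inside $G$ is a continuous section $s$ of the orbit map $p\colon G\to {Gr}_{d_k}(\mathbb{H}^n)$, and you correctly observe that the sectional category of $p$ is far larger than $d_k$ (the cup-length of $\ker p^*$ is of order $d_k(n-d_k)$), so no cover by $d_k+1$ section-admitting open sets exists. Your replacement --- ``simultaneously rotating the whole parallel family of planes in a level set to $V_0$ along a path in $G$'' --- is not defined, and as far as one can tell it again amounts to choosing a continuous family of group elements carrying each plane of the stratum to $V_0$, i.e.\ a section over that stratum; so the self-contradiction is not resolved. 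The contraction must genuinely leave $\mathcal{O}_k$ and use the topology of $G$, and producing such a contraction of a whole thickened stratum is exactly the step the paper also cannot complete (their reduction to contractibility in $\operatorname{SL}(n,\mathbb{H})$ via the Cartan decomposition is an attempt at precisely this). Your stratification by $h(V)=d_k-\dim_{\mathbb{H}}(V\cap V_0)$ is a genuinely different decomposition from the paper's cover by the complements of the sets $X_{j,k}$ of planes containing a fixed coordinate vector, but neither decomposition closes the argument. For the lower bound, the paper offers nothing, and your plan (obstructing a lift through the Ganea fibration $G_{d_k-1}(G)\to G$ by secondary operations tied to the transgressions $x_{4i-1}\mapsto q_i$) is a program, not a proof: no operation is constructed, no indeterminacy is controlled, and no computation is made showing that exactly $d_k$ independent obstructions survive. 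Even your base case ($\operatorname{cat}_{\operatorname{Sp}(2)}(S^4)\geq 1$, i.e.\ that the orbit $4$-sphere generates $\pi_4(\operatorname{Sp}(2))\cong\mathbb{Z}/2$) is asserted rather than verified. In short, both directions remain open, which is consistent with the paper presenting the statement only as a conjecture.
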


As we observed already in the introduction, if the conjecture is true, then Theorem~\ref{thm: main} quickly shows that 
\begin{equation*}
\operatorname{cat}\left( \operatorname{Sp}(n)\right) \leq \left\lfloor \frac{(n+2)^{2}}{4}%
\right\rfloor -1.
\end{equation*}%


In terms of trying to show that $\operatorname{cat}_{\operatorname{Sp}(n)}(\mathcal{O})%
_{k}\leq d_{k}$, there is an obvious choice of a cover of $\mathcal{O}_{k}$.
\ For this, we introduce the following notation. For the sake of clarity,
we assume we are in the case of $d_{k}=k$, i.e., $1\leq k\leq n/2$.

For $1\leq j\leq k+1$, write $x\in {Gr}_{k-1}(\mathbb{H}^{n-1})$ as%
\begin{equation*}
x=\left( 
\begin{array}{c}
x_{j,1} \\ 
x_{j,2}%
\end{array}%
\right) 
\end{equation*}%
with $x_{j,1}\in M_{(j-1)\times (k-1)}(\mathbb{H})$ and $x_{j,2}\in
M_{(n-j)\times (k-1)}(\mathbb{H})$. Let $X_{j,k}\cong {Gr}_{k-1}(\mathbb{H}%
^{n-1})\subseteq {Gr}_{k}(\mathbb{H}^{n})$ be given by%
\begin{equation*}
\{\left( 
\begin{array}{cc}
0_{(j-1)\times 1} & x_{j,1} \\ 
1 & 0_{1\times (k-1)} \\ 
0_{(k-j)\times 1} & x_{j,2}%
\end{array}%
\right)  \mid x\in {Gr}_{k-1}(\mathbb{H}^{n-1})\}.
\end{equation*}%
Write $y\in {Gr}_{k}(\mathbb{H}^{n-1})$ as%
\begin{equation*}
y=\left( 
\begin{array}{c}
y_{j,1} \\ 
y_{j,2}%
\end{array}%
\right) 
\end{equation*}%
with $y_{j,1}\in M_{(j-1)\times k}(\mathbb{H})$ and $y_{j,2}\in M_{(n-j)\times k}(\mathbb{H})$. Let $Y_{j,k}\cong {Gr}_{k}(\mathbb{H}^{n-1})\subseteq {%
Gr}_{k}(\mathbb{H}^{n})$ be given by%
\begin{equation*}
\{\left( 
\begin{array}{c}
y_{j,1} \\ 
0_{1\times k} \\ 
y_{j,2}%
\end{array}%
\right)  \mid y\in {Gr}_{k}(\mathbb{H}^{n-1})\}.
\end{equation*}

\begin{proposition}\label{P:Grassmannian}
\begin{itemize}
\item[(a)] $\{{Gr}_{k}(\mathbb{H}^{n})\backslash
X_{j,k} \mid 1\leq j\leq k+1\}$ is an open cover of ${Gr}%
_{k}(\mathbb{H}^{n})$. 
\item[(b)] $Y_{j,k}$ is a deformation retract of ${Gr}%
_{k}(\mathbb{H}^{n})\backslash X_{j,k}$. 
\item[(c)] Written in $(j-1)\times 1\times (n-j)$ block form, $\tau
_{k}^{-1}(Y_{j,k})$ is%
\begin{equation*}
\{\left( 
\begin{array}{ccc}
A &  & B \\ 
& 1 &  \\ 
C &  & D%
\end{array}%
\right)  \mid \left( 
\begin{array}{cc}
A & B \\ 
C & D%
\end{array}%
\right) \in \operatorname{Sp}(n-1)\text{ and conjugate to }\exp v_{k-1,n-1}\} 
\end{equation*}%
where $v_{k,n}=
\i\operatorname{diag}(\overset{k}{\overbrace{\pi ,\ldots ,\pi }},
\overset{n-k}{\overbrace{0,\ldots ,0})}$.
\end{itemize}
\end{proposition}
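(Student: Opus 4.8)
The plan is to transport each assertion about $\operatorname{Gr}_k(\HH^n)$ into quaternionic linear algebra, using that $X_{j,k}$ is exactly the set of $k$-planes containing the basis vector $e_j$, while $Y_{j,k}$ is the set of $k$-planes lying in the coordinate hyperplane $\{v_j=0\}=e_j^\perp$. For (a) I would first observe that $X_{j,k}$, being the image of the compact manifold $\operatorname{Gr}_{k-1}(\HH^{n-1})$ under a smooth embedding (equivalently, cut out by the closed condition $e_j\in V$), is closed, so each $\operatorname{Gr}_k(\HH^n)\setminus X_{j,k}$ is open. The covering property is then the remark that $\bigcap_{j=1}^{k+1}X_{j,k}=\varnothing$: a $k$-plane cannot contain the $k+1$ linearly independent vectors $e_1,\dots,e_{k+1}$, so every $V$ omits some $e_j$ with $1\le j\le k+1$.

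For (b) the natural retraction projects a plane away from the $e_j$-direction. Concretely, represent $V\in\operatorname{Gr}_k(\HH^n)\setminus X_{j,k}$ by an $n\times k$ matrix $M$ of rank $k$ and set $H([M],s)=[M_s]$, where $M_s$ is $M$ with its $j$-th row scaled by $(1-s)$. First I would check that this is well defined on the quotient: scaling a fixed row commutes with right multiplication by $\operatorname{GL}(k,\HH)$, so $(Mh)_s=M_s h$. The substantive verifications are that $M_s$ retains rank $k$ and that $[M_s]$ still omits $e_j$ for every $s\in[0,1]$; both follow from $V\cap\HH e_j=0$, which is precisely the hypothesis $e_j\notin V$. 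At $s=1$ the $j$-th row is zero, so $H([M],1)\in Y_{j,k}$, and points of $Y_{j,k}$ (already having vanishing $j$-th row) are fixed throughout, giving the asserted deformation retraction.

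For (c) I would unwind $\tau_k$. By the Lemma, $\tau_k$ sends $A=c_g(\exp v_k)$ to its $(-1)$-eigenspace $V_-(A)=g\,\HH^k$, so $A\in\tau_k^{-1}(Y_{j,k})$ iff $V_-(A)\subseteq e_j^\perp$. Since $A=A^*$ is a unitary involution in $\Sp(n)$, its $\pm1$-eigenspaces are orthogonal, whence $e_j\perp V_-(A)$ is equivalent to $e_j\in V_+(A)$, i.e. $Ae_j=e_j$. Combined with $A=A^*$, this forces the $j$-th row and column of $A$ to equal $e_j^*$, which is exactly the displayed block form with a $1$ in the center. The remaining $(n-1)\times(n-1)$ block is the restriction of $A$ to $e_j^\perp\cong\HH^{n-1}$, hence an element of $\Sp(n-1)$; it is again an involution, and its $(-1)$-eigenspace is $V_-(A)$, of quaternionic dimension $k$. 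A dimension count then pins down its conjugacy class as that of $\exp v_{k,n-1}$, in agreement with the abstract identification $Y_{j,k}\cong\operatorname{Gr}_k(\HH^{n-1})$.

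The step I expect to be most delicate is (b): one must confirm not only that the row-scaling homotopy descends to the Grassmannian, but, more importantly, that rank $k$ and the non-containment of $e_j$ persist for all intermediate $s$ rather than merely at the endpoints. The underlying computations are short, yet they rely on keeping the quaternionic conventions consistent—$\HH^n$ as a right vector space, $\operatorname{GL}(k,\HH)$ acting on the right, and the Hermitian form defining $\Sp(n)$—which is likewise the only genuine bookkeeping hazard in the eigenspace analysis of (c).
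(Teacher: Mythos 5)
Your argument is correct and, for parts (a) and (b), essentially the paper's own: the covering property comes from $\bigcap_{j=1}^{k+1}X_{j,k}=\varnothing$ because a $k$-plane cannot contain $k+1$ independent vectors, and the retraction is the same row-scaling homotopy multiplying the $j$-th row by $(1-s)$; you additionally record the well-definedness under the right $\operatorname{GL}(k,\HH)$-action and the persistence of rank $k$ and of $e_j\notin V$ for intermediate $s$, which the paper leaves implicit. For (c) the paper instead describes $\tau_k^{-1}(Y_{j,k})$ by the vanishing of the block of $g$ where the $j$-th row meets the first $k$ columns and then computes $g\exp(v_k)g^{\ast}$ by explicit block multiplication, whereas you pass through the eigenspace description ($\tau_k$ sends $A$ to $V_-(A)$, so membership in $\tau_k^{-1}(Y_{j,k})$ is equivalent to $Ae_j=e_j$); this is the same computation in a more conceptual packaging. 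One substantive discrepancy: your dimension count yields a complementary block conjugate to $\exp v_{k,n-1}$ rather than the stated $\exp v_{k-1,n-1}$, and you are right. Since the central entry is $+1$, the $(-1)$-eigenspace of $A$ has quaternionic dimension $k$ and lies entirely in $e_j^{\perp}$, so it survives the restriction intact; a trace count (or the case $n=2$, $k=1$, $j=1$, where the block is $-1=\exp v_{1,1}$) confirms this, as does the identification $Y_{j,k}\cong {Gr}_{k}(\HH^{n-1})$ rather than ${Gr}_{k-1}(\HH^{n-1})$. The printed statement thus contains an off-by-one typo, and your proof establishes the corrected version.
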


\begin{proof}
For part (a), simply observe that a $k$-plane in $X_{1,k}\cap \cdots \cap
X_{k+1,k}$ would have to contain $k+1$ independent vectors which is
impossible. For part (b), observe that ${Gr}_{k}(\mathbb{H%
}^{n})\backslash X_{j,k}$ is the of the set of%
\begin{equation*}
\left( 
\begin{array}{c}
x_{(j-1)\times k} \\ 
y_{1\times k} \\ 
z_{(n-j)\times k}%
\end{array}%
\right) \in {Gr}_{k}(\mathbb{H}^{n})\text{ so that }\left( 
\begin{array}{c}
x_{(j-1)\times k} \\ 
z_{(n-j)\times k}%
\end{array}%
\right) \in {Gr}_{k}(\mathbb{H}^{n-1}). 
\end{equation*}%
Therefore, the retraction $R:{Gr}_{k}(\mathbb{H}%
^{n})\backslash X_{j,k}\times I\rightarrow X_{j,k}$ given by%
\begin{equation*}
R(\left( 
\begin{array}{c}
x_{(j-1)\times k} \\ 
y_{1\times k} \\ 
z_{(n-j)\times k}%
\end{array}%
\right) ,s)=\left( 
\begin{array}{c}
x_{(j-1)\times k} \\ 
(1-s)y_{1\times k} \\ 
z_{(n-j)\times k}%
\end{array}%
\right) 
\end{equation*}%
does the trick. For part (c), observe that $\tau _{k}^{-1}(Y_{j,k})$ can
be written in $(j-1)\times 1\times (n-k)$ block form as%
\begin{equation*}
\{g=\left( 
\begin{array}{ccc}
\alpha & \beta & \gamma \\ 
0 & \delta & \zeta \\ 
\eta & \iota & \kappa%
\end{array}%
\right) \in G\}. 
\end{equation*}%
Making note that $gg^{\ast }=I$, part (c) follows immediately by explicit
matrix multiplication using $(j-1)\times 1\times (k-j)\times (n-j)$ block
form when $j\leq k$ and by using $k\times 1\times (n-k-1)$ block form when $%
j=k+1$.
\end{proof}

\begin{proposition}
If the sets $\tau _{k}^{-1}(Y_{j,k})$
are contractible in $\operatorname{SL}(n,\mathbb{H})$, then $\operatorname{cat}_{\operatorname{Sp}(n)}(\mathcal{O}_{k})\leq k$.
\end{proposition}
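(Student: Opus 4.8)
The goal is to exhibit $k+1$ open subsets of $\Sp(n)$ that cover $\O_{k}$ and are each contractible in $\Sp(n)$; by the definition of relative category this gives $\cat_{\Sp(n)}(\O_{k})\leq k$. Since we are in the case $d_{k}=k$, the map $\tau_{k}\colon\O_{k}\to{Gr}_{k}(\HH^{n})$ is a diffeomorphism, so I would transport the cover of Proposition~\ref{P:Grassmannian}(a) to $\O_{k}$ by setting
\[
W_{j}=\tau_{k}^{-1}\!\left({Gr}_{k}(\HH^{n})\backslash X_{j,k}\right),\qquad 1\leq j\leq k+1 .
\]
These are $k+1$ sets, open in $\O_{k}$, and they cover $\O_{k}$. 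They are not open in $\Sp(n)$, however, so the first task is to thicken them.

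For the thickening I would reuse the open set $U_{k}\supseteq\O_{k}$ and its deformation retraction $R_{k}\colon U_{k}\times I\to U_{k}$ onto $\O_{k}$ produced in Section~\ref{S:cover}, writing $\rho_{k}=R_{k}(\cdot,1)\colon U_{k}\to\O_{k}$ for the associated retraction. Put $V_{j}=\rho_{k}^{-1}(W_{j})$. Each $V_{j}$ is then open in $U_{k}$, hence open in $\Sp(n)$, contains $W_{j}$, and $\{V_{j}\}$ still covers $\O_{k}$ because $\rho_{k}$ restricts to the identity on $\O_{k}$. Moreover, since $R_{k}$ moves the parameter $t$ toward $v_{k}$ along the straight-line contraction of Proposition~\ref{lem 2b}(a), one checks that $\rho_{k}\circ R_{k}(\cdot,s)=\rho_{k}$ for all $s$; hence $R_{k}$ preserves the fibres of $\rho_{k}$ and restricts to a deformation retraction of $V_{j}$ onto $W_{j}$ inside $\Sp(n)$.

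It remains to see that each $V_{j}$ is contractible in $\Sp(n)$, and here I would work first in the larger group $\SL(n,\HH)\supseteq\Sp(n)$. Concatenating three deformations, all taking place inside $\SL(n,\HH)$, does the job: $V_{j}$ deformation retracts onto $W_{j}$ (previous paragraph); $W_{j}$ deformation retracts onto $\tau_{k}^{-1}(Y_{j,k})$ by Proposition~\ref{P:Grassmannian}(b) transported through the diffeomorphism $\tau_{k}$; and $\tau_{k}^{-1}(Y_{j,k})$ is contractible in $\SL(n,\HH)$ by hypothesis. Thus $V_{j}$ is contractible in $\SL(n,\HH)$. To descend to the compact group, I would use that $\Sp(n)$ is the maximal compact subgroup of $\SL(n,\HH)$ and therefore a deformation retract of it, via a retraction $r\colon\SL(n,\HH)\to\Sp(n)$ with $r|_{\Sp(n)}=\mathrm{id}$. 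Applying $r$ to the contracting homotopy of $V_{j}$—which begins inside $V_{j}\subseteq\Sp(n)$—produces a contraction of $V_{j}$ within $\Sp(n)$. Hence $\{V_{j}\mid 1\leq j\leq k+1\}$ is a cover of $\O_{k}$ by $k+1$ open subsets of $\Sp(n)$, each contractible in $\Sp(n)$, and $\cat_{\Sp(n)}(\O_{k})\leq k$.

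The one delicate point is the thickening: the definition of relative category demands open subsets of the ambient group $\Sp(n)$, whereas the natural cover pulled back from the Grassmannian lives only in the submanifold $\O_{k}$; reusing the neighborhood $U_{k}$ and its retraction $R_{k}$ circumvents the need for a separate tubular-neighborhood argument. The transfer of contractibility from $\SL(n,\HH)$ down to $\Sp(n)$ is exactly why the hypothesis is stated for the larger, noncompact group—there is more room to contract in $\SL(n,\HH)$, and the maximal-compact deformation retraction pushes any such contraction back into $\Sp(n)$.
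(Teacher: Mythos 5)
Your proof is correct and follows essentially the same route as the paper: transport the cover of Proposition~\ref{P:Grassmannian} to $\mathcal{O}_{k}$, and use the retraction of $\operatorname{SL}(n,\mathbb{H})$ onto its maximal compact subgroup $\operatorname{Sp}(n)$ (the paper phrases this via the Cartan decomposition $g=\kappa(g)\exp(\rho(g))$, composing the given contraction with $\kappa$) to convert contractibility in $\operatorname{SL}(n,\mathbb{H})$ into contractibility in $\operatorname{Sp}(n)$. Your additional thickening of the cover to sets $V_{j}=\rho_{k}^{-1}(W_{j})$ that are genuinely open in $\operatorname{Sp}(n)$ is a point the paper leaves implicit, and you handle it correctly using the fibre-preserving property of $R_{k}$.
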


\begin{proof}
Let $F_{1}: \tau _{k}^{-1}(Y_{j,k})\times I\rightarrow \operatorname{SL}(n,\mathbb{H})$ be a contraction that takes 
$\tau _{k}^{-1}(Y_{j,k})$ to a point.
Using the Cartan decomposition, there is a diffeomorphism 
$\operatorname{SL}(n,\mathbb{H})\cong G\times \mathfrak{p}$ where 
$\mathfrak{p}$ is the the $-1$ eigenspace
of the Cartan involution corresponding to $\mathfrak{sp}(n)$, i.e., 
the involution given by $\theta \left(
x\right) =-x^{\ast }$. For $g\in \operatorname{SL}(n,\mathbb{H})$, uniquely write $%
g=\kappa (g)\exp (\rho (g))$ with $\kappa (g)\in G$ and $\rho (g)\in 
\mathfrak{p}$. Finally, define $F_{2}:\tau _{k}^{-1}(Y_{j,k})\times
I\rightarrow G$ by $F_{2}(g,s)=\kappa \left( F_{1}(g,s)\right) $. By
construction, $F_{2}$ contracts $\tau _{k}^{-1}(Y_{j,k})$ to a point.
Thus, if  the sets $\tau _{k}^{-1}(Y_{j,k})$ are contractible in 
$\operatorname{SL}(n,\mathbb{H})$ then they are also contractible in
$G=\operatorname{Sp}(n)$. The proposition then follows from Proposition~\ref{P:Grassmannian}.
 \end{proof}

At the present time, we do not know whether $\tau _{k}^{-1}(Y_{j,k})$ is
contractible in $\operatorname{SL}(n,\mathbb{H})$. It is worth noting that a similar
result can be obtained by showing that $\tau _{k}^{-1}(Y_{j,k})$ is
contractible in $\operatorname{Sp}(2n,\mathbb{C})$. This too is unknown.

\subsection{$G=\operatorname{Spin}(2n+1)$}

Write the tensor algebra over $\mathbb{R}^{m}$ as $\mathcal{T}_{m}(\mathbb{R}%
)$. Then the Clifford algebra is $\mathcal{C}_{m}(\mathbb{R})=\mathcal{T}%
_{m}(\mathbb{R})/\mathcal{I}$ where $\mathcal{I}$ is the ideal of $\mathcal{T%
}_{m}(\mathbb{R})$ generated by $\{(x\otimes x+\left\Vert x\right\Vert
^{2}) \mid x\in \mathbb{R}^{m}\}$. By way of notation for Clifford
multiplication, write $x_{1}x_{2}\cdots x_{k}$ for the element $x_{1}\otimes
x_{2}\otimes \cdots \otimes x_{k}+\mathcal{I}\in \mathcal{C}_{m}(\mathbb{R})$
where $x_{1},x_{2},\ldots ,x_{m}\in \mathbb{R}^{m}$. Write $\mathcal{C}%
_{m}^{+}(\mathbb{R})$ for the subalgebra of $\mathcal{C}_{m}(\mathbb{R})$
spanned by all products of an even number of elements of $\mathbb{R}^{m}$. \
Conjugation, an anti-involution on $\mathcal{C}_{m}(\mathbb{R})$, is defined
by $(x_{1}x_{2}\cdots x_{k})^{\ast }=(-1)^{k}\,x_{k}\cdots x_{2}x_{1}$ for $%
x_{i}\in \mathbb{R}^{m}$.

Then%
\begin{equation*}
\operatorname{Spin}(m)=\{g\in \mathcal{C}_{m}^{+}(\mathbb{R})\mid
gg^{\ast }=1\text{ and }gxg^{\ast }\in \mathbb{R}^{m}\text{ for all }x\in 
\mathbb{R}^{m}\}.
\end{equation*}%
In fact, it is the case that $\operatorname{Spin}(m)=\{x_{1}x_{2}\cdots
x_{2k} \mid x_{i}\in S^{m-1}$ for $2\leq 2k\leq 2m\}$. If we write $\left( 
\mathcal{A}g\right) x=gxg^{\ast }$ when $g\in \operatorname{Spin}(m)$
and $x\in \mathbb{R}^{m}$, then $\mathcal{A}$ gives the double cover of $%
\operatorname{SO}(m)$:%
\begin{equation*}
\{1\}\rightarrow \{\pm 1\}\rightarrow \operatorname{Spin}(m)\overset{%
\mathcal{A}}{\rightarrow }\operatorname{SO}(m)\rightarrow \{I_{m}\}.
\end{equation*}

A maximal torus\ $T_{0}$ for $\operatorname{SO}(2n+1)$ is given by%
\begin{equation*}
T_{0}=\{\left( 
\begin{array}{cccccc}
\cos \theta _{1} & \sin \theta _{1} &  &  &  &  \\ 
-\sin \theta _{1} & \cos \theta _{1} &  &  &  &  \\ 
&  & \ddots  &  &  &  \\ 
&  &  & \cos \theta _{n} & \sin \theta _{n} &  \\ 
&  &  & -\sin \theta _{n} & \cos \theta _{n} &  \\ 
&  &  &  &  & 1%
\end{array}%
\right)  \mid \theta _{i}\in \mathbb{R\}}
\end{equation*}%
with Lie algebra%
\begin{equation*}
\mathfrak{t}_{0}=\{\left( 
\begin{array}{cccccc}
0 & \theta _{1} &  &  &  &  \\ 
-\theta _{1} & 0 &  &  &  &  \\ 
&  & \ddots  &  &  &  \\ 
&  &  & 0 & \theta _{n} &  \\ 
&  &  & -\theta _{n} & 0 &  \\ 
&  &  &  &  & 0%
\end{array}%
\right)  \mid \theta _{i}\in \mathbb{R\}}.
\end{equation*}%
We write $\exp _{\operatorname{SO}(2n+1)}$ for the exponential map from $\mathfrak{t}_{0}$
onto $T_{0}$ and condense notation by writing $E_{k}$ for the element of $%
\mathfrak{t}$ given by%
\begin{equation*}
E_{k}=\operatorname{blockdiag}\bigg(\ 
\overset{k}{\overbrace{\left( 
\begin{array}{cc}
0 & 0 \\ 
0 & 0%
\end{array}%
\right) ,\ldots ,\left( 
\begin{array}{cc}
0 & 0 \\ 
0 & 0%
\end{array}%
\right) ,\left( 
\begin{array}{cc}
0 & 1 \\ 
-1 & 0%
\end{array}%
\right) }},\left( 
\begin{array}{cc}
0 & 0 \\ 
0 & 0%
\end{array}%
\right) ,\ldots ,\left( 
\begin{array}{cc}
0 & 0 \\ 
0 & 0%
\end{array}%
\right) ,0 \bigg) .
\end{equation*}

Writing $e_{k}$ for the $k^{\text{th}}$ standard basis vector in $\mathbb{R}%
^{n}$, observe that $\mathcal{A}\left( \cos \theta -\sin \theta
\,e_{2k-1}e_{k}\right) $ acts by the rotation $\left( 
\begin{array}{cc}
\cos 2\theta  & \sin 2\theta  \\ 
-\sin 2\theta  & \cos 2\theta 
\end{array}%
\right) $ in the $e_{2k-1}e_{k}$ plane. It follows that
\begin{equation*}
T=\{\left( \cos \theta _{1}-\sin \theta _{1}\,e_{1}e_{2}\right) \cdots
\left( \cos \theta _{n}-\sin \theta _{n}\,e_{2n-1}e_{2n}\right)  \mid \theta
_{k}\in \mathbb{R}\}
\end{equation*}
is a maximal torus of $\operatorname{Spin}(2n+1)$. If we identify 
$\mathfrak{t}$ with the Lie algebra of $T$ and write $\exp $ for the
exponential map of $\operatorname{Spin}(2n+1)$ taking $\mathfrak{t}$ onto 
$T$, then $\exp_{\SO(n)}=\mathcal{A}\circ\exp$.
%
It follows that%
\begin{equation*}
\exp \left( \theta E_{k}\right) =\left( \cos(\theta/2)-\sin(\theta/2)\,e_{2k-1}e_{2k}\right) .
\end{equation*}

Using the definitions, it is straightforward to check that%
\begin{eqnarray*}
v_{0} &=&0 \\
v_{1} &=&2\pi E_{1} \\
v_{k} &=&\pi \sum_{j=1}^{k}E_{j}
\end{eqnarray*}%
for $2\leq k\leq n$. Therefore $\exp v_{0}=1$, $\exp v_{1}=-1$, and $\exp
v_{k}=(-1)^{k}\prod_{j=1}^{k}e_{2j-1}e_{j}$. Of course, 
$\mathcal{O}_{0}=\{1\}$ and $\mathcal{O}_{1}=\{-1\}$ so 
$\operatorname{cat}(\mathcal{O}_{0})=\operatorname{cat}(\mathcal{O}_{1})=0$.

The other orbits are easy to describe, though calculating 
$\operatorname{cat}_{G}(\mathcal{O}_{k})$ is not easy.

\begin{proposition}
\label{thm:  orbits for spin odd}
For  $2\leq k\leq n$,
\begin{eqnarray*}
\mathcal{O}_{k} &\cong &\operatorname{Spin}(2k)/
\operatorname{Spin}(2k)\operatorname{Spin}(2n+1-2k)
\\
&\cong &\operatorname{SO}(2n+1)/\left(\operatorname{SO}(2k)\times \operatorname{SO}(2n+1-2k)\right)\cong \widetilde{{Gr}%
_{2k}}(\mathbb{R}^{2n+1})\text{,}
\end{eqnarray*}%
the Grassmannian of oriented $2k$-planes in $\mathbb{R}^{2n+1}$.
\end{proposition}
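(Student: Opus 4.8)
The plan is to realize $\mathcal O_k$ as a homogeneous space and compute the relevant stabilizer. Since $\mathcal O_k$ is the conjugacy class of $\exp v_k$, the orbit map $c_g(\exp v_k)\mapsto gZ$ gives a diffeomorphism $\mathcal O_k\cong\Spin(2n+1)/Z$, where $Z=Z_{\Spin(2n+1)}(\exp v_k)$, so everything reduces to determining $Z$. Writing $\omega=e_1e_2\cdots e_{2k}$ for the Clifford volume element of $V=\operatorname{span}(e_1,\ldots,e_{2k})$, the formula for $\exp v_k$ computed above reads $\exp v_k=(-1)^k\omega$, so $Z=\{g\in\Spin(2n+1) : g\omega g^{-1}=\omega\}$. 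I would first record two elementary parity computations in the Clifford algebra: $e_i\omega=-\omega e_i$ for $1\le i\le 2k$, and $e_i\omega=\omega e_i$ for $i>2k$. On the $\SO$ side, $\mathcal A(\exp v_k)=\exp_{\SO(2n+1)}(v_k)=\operatorname{diag}(-I_{2k},I_{2n+1-2k})$, whose centralizer in $\SO(2n+1)$ is $S(\operatorname{O}(2k)\times\operatorname{O}(2n+1-2k))$; this has exactly two connected components, with identity component $\SO(2k)\times\SO(2n+1-2k)$.

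The heart of the proof is to decide which of these two components lifts into $Z$. Because $-1\in\ker\mathcal A$ is central, any $g$ with $\mathcal A(g)\in Z_{\SO(2n+1)}(\mathcal A(\exp v_k))$ satisfies $\mathcal A(g\omega g^{-1})=\mathcal A(\omega)$ with $g\omega g^{-1}\in\Spin(2n+1)$, and therefore $g\omega g^{-1}=\chi(\mathcal A(g))\,\omega$ for a sign $\chi(\mathcal A(g))\in\{\pm1\}$ that is independent of the chosen lift. A one-line check shows $\chi$ is a homomorphism from the centralizer to $\{\pm1\}$, hence is identically $+1$ on the connected identity component $\SO(2k)\times\SO(2n+1-2k)$. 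To evaluate $\chi$ on the other component I would use the lift $g=e_1e_{2k+1}$ of the element reversing one axis of $V$ and one of $V^\perp$: the two Clifford facts give $g\omega=e_1\omega e_{2k+1}=-\omega e_1e_{2k+1}=-\omega g$, so $\chi=-1$ there. Thus $g\in Z$ exactly when $\mathcal A(g)$ lies in the identity component, i.e. $Z=\mathcal A^{-1}\big(\SO(2k)\times\SO(2n+1-2k)\big)$.

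The three stated identifications now follow formally. Since $-1\in Z$, the double cover $\mathcal A$ descends to a diffeomorphism $\mathcal O_k=\Spin(2n+1)/Z\cong\SO(2n+1)/(\SO(2k)\times\SO(2n+1-2k))$, and this last space is by definition $\widetilde{{Gr}_{2k}}(\mathbb{R}^{2n+1})$, since the stabilizer in $\SO(2n+1)$ of an oriented $2k$-plane is precisely $\SO(2k)\times\SO(2n+1-2k)$. For the first description I would identify $Z=\mathcal A^{-1}(\SO(2k)\times\SO(2n+1-2k))$ with $\Spin(2k)\,\Spin(2n+1-2k)$: the even Clifford subalgebras on $V$ and on $V^\perp$ commute, so $(a,b)\mapsto ab$ defines a homomorphism $\Spin(2k)\times\Spin(2n+1-2k)\to\Spin(2n+1)$ whose kernel is $\{(1,1),(-1,-1)\}$ because $\Spin(2k)\cap\Spin(2n+1-2k)=\{\pm1\}$; its image contains $-1$ and surjects onto $\SO(2k)\times\SO(2n+1-2k)$ under $\mathcal A$, so it equals the full preimage $Z$. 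Hence $\mathcal O_k\cong\Spin(2n+1)/\big(\Spin(2k)\,\Spin(2n+1-2k)\big)$.

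The main obstacle is exactly the sign computation of the second paragraph. The naive expectation is that $\mathcal O_k$ should be the unoriented Grassmannian $\SO(2n+1)/S(\operatorname{O}(2k)\times\operatorname{O}(2n+1-2k))$; it is only because the orientation-reversing component sends $\exp v_k$ to $-\exp v_k$ rather than fixing it that $Z$ is cut down to the identity component of the $\SO$-centralizer, producing the oriented Grassmannian. The remaining steps are routine bookkeeping with the covering $\mathcal A$ and the orbit--stabilizer theorem.
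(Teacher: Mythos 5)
Your proof is correct and follows essentially the same route as the paper: both reduce to computing the centralizer of $\exp v_k=(-1)^k e_1\cdots e_{2k}$ inside $\mathcal A^{-1}\bigl(S(\operatorname{O}(2k)\times \operatorname{O}(2n+1-2k))\bigr)$ and show that the identity component $\operatorname{Spin}(2k)\operatorname{Spin}(2n+1-2k)$ centralizes the volume element while the other component anticommutes with it, forcing the orbit to be the oriented rather than unoriented Grassmannian. The only cosmetic difference is that you verify triviality on the identity component via the sign character $\chi$ and an explicit representative $e_1e_{2k+1}$, whereas the paper invokes the center of $\operatorname{Spin}(2k)$ and the anticommutation of $\operatorname{Pin}(2k)_1$ wholesale.
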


\begin{proof}
Since $\mathcal{A}(\exp v_{k})=\left( 
\begin{array}{cc}
-I_{2k} &  \\ 
& I_{n-2k}%
\end{array}%
\right) $, 
$$\mathcal{A}(\mathcal{O}_{k})\cong \operatorname{SO}(2n+1)/S\left(  \operatorname{O}(2k)\times
 \operatorname{O}(2n+1-2k)\right) \cong {Gr}_{2k}(\mathbb{R}^{2n+1}),$$ 
the Grassmannian of $2k$-planes in $\mathbb{R}^{2n+1}$. Moreover, $\mathcal{A}:%
\mathcal{O}_{k}\rightarrow \mathcal{A}(\mathcal{O}_{k})$ is a double cover.
To see this, observe that there is a Weyl group (isomorphic to $%
S_{n}\ltimes \mathbb{Z}_{2}^{n}$) element taking $v_{k}$ to $-\pi E_{1}+\pi
\sum_{j=2}^{k}E_{j}$ which exponentiates to $-\exp v_{k}$.

To prove the proposition, first observe that the stabilizer of $\exp v_{k}$
under conjugation must be contained in $S=\mathcal{A}^{-1}(S\left(
 \operatorname{O}(2k)\times  \operatorname{O}(2n+1-2k)\right) )=S\left( \operatorname{Pin}(2k)\operatorname{Pin}(2n+1-2k)\right) $. Since $\operatorname{Pin}(2k)\cap 
\operatorname{Pin}(2n+1-2k)\subseteq \mathbb{R}$, it follows that the
connected component of the identity of $S$ is $S_{0}=\operatorname{Spin}(2k)\operatorname{Spin}(n-2k)\cong 
\operatorname{Spin}(2k)\times \operatorname{Spin}(n-2k)/\{\pm (1,1)\}$ and the other component is
diffeomorphic to $\operatorname{Pin}(2k)_{1}\times 
\operatorname{Pin}(2n+1-2k)_{1}$ where $\operatorname{Pin}(j)_{1}$ is the non-identity
component of $\operatorname{Pin}(j)$. Recalling that the center of $%
\operatorname{Spin}(2k)$ is $\{\pm 1,\pm \exp v_{k}\}$, it follows that 
$S_{0}$ is contained in the stabilizer of $\exp v_{k}$. However, 
$\operatorname{Pin}(2k)_{1}$ anticommutes with $\exp v_{k}$ while 
$\operatorname{Pin}(2n+1-2k)_{1}$ commutes. Therefore, the stabilizer of $\exp
v_{k}$ is $S_{0}$. Finally, since $S_{0}=\mathcal{A}^{-1}(\operatorname{SO}(2k)\times
\operatorname{SO}(n-2k))$, the proof is complete.
\end{proof}

The relative cat calculation of $\mathcal{O}_{k}$ in 
$\operatorname{Spin}(2n+1)$ is not known.

\subsection{$G=\operatorname{Spin}(2n)$}

A maximal torus\ $T_{0}$ for $\operatorname{SO}(2n)$ is given by%
\begin{equation*}
T_{0}=\{\left( 
\begin{array}{ccccc}
\cos \theta _{1} & \sin \theta _{1} &  &  &  \\ 
-\sin \theta _{1} & \cos \theta _{1} &  &  &  \\ 
&  & \ddots  &  &  \\ 
&  &  & \cos \theta _{n} & \sin \theta _{n} \\ 
&  &  & -\sin \theta _{n} & \cos \theta _{n}%
\end{array}%
\right)  \mid \theta _{i}\in \mathbb{R\}}
\end{equation*}%
with Lie algebra%
\begin{equation*}
\mathfrak{t}=\{\left( 
\begin{array}{ccccc}
0 & \theta _{1} &  &  &  \\ 
-\theta _{1} & 0 &  &  &  \\ 
&  & \ddots  &  &  \\ 
&  &  & 0 & \theta _{n} \\ 
&  &  & -\theta _{n} & 0%
\end{array}%
\right)  \mid \theta _{i}\in \mathbb{R\}}.
\end{equation*}

As before, write%
\begin{equation*}
E_{k}=\operatorname{blockdiag}\left( \overset{k}{\overbrace{\left( 
\begin{array}{cc}
0 & 0 \\ 
0 & 0%
\end{array}%
\right) ,\ldots ,\left( 
\begin{array}{cc}
0 & 0 \\ 
0 & 0%
\end{array}%
\right) ,\left( 
\begin{array}{cc}
0 & 1 \\ 
-1 & 0%
\end{array}%
\right) }},\left( 
\begin{array}{cc}
0 & 0 \\ 
0 & 0%
\end{array}%
\right) ,\ldots ,\left( 
\begin{array}{cc}
0 & 0 \\ 
0 & 0%
\end{array}%
\right) \right) .
\end{equation*}

From the definitions, it is straightforward to check that%
\begin{eqnarray*}
v_{0} &=&0 \\
v_{1} &=&2\pi E_{1} \\
v_{k} &=&\pi \sum_{j=1}^{k}E_{j} \\
v_{n-1} &=&\pi \sum_{j=1}^{n-1}E_{j}-\pi E_{n}
\end{eqnarray*}%
for $2\leq k\leq n$, $k\neq n-1$. Therefore $\exp v_{0}=1$, $\exp v_{1}=-1$%
, $\exp v_{k}=(-1)^{k}\prod_{j=1}^{k}e_{2j-1}e_{j}$, and $\exp
v_{n-1}=(-1)^{n-1}\prod_{j=1}^{n}e_{2j-1}e_{j}$.Of course, $%
\mathcal{O}_{0}=\{1\}$ and $\mathcal{O}_{1}=\{-1\}$ so $\operatorname{cat}%
(\mathcal{O}_{0})=\operatorname{cat}(\mathcal{O}_{1})=0$. As in Propostion~\ref{thm: 
orbits for spin odd}, the remaining conjugacy classes are 
\begin{eqnarray*}
\mathcal{O}_{k} &\cong &\operatorname{Spin}(2k)/\operatorname{%
Spin}_{2k}(\mathbb{R})\operatorname{Spin}(2n-2k) \\
&\cong &\operatorname{SO}(2n)/\operatorname{SO}(2k)\times \operatorname{SO}(2n-2k)\cong \widetilde{{Gr}%
_{2k}}(\mathbb{R}^{2n})\text{,}
\end{eqnarray*}%
the Grassmannian of oriented $2k$-planes in $\mathbb{R}^{2n}$. Again, the
relative category in $\operatorname{Spin}(2n)$ is not known.

\let\section=\oldsection

\def\germ{\mathfrak}\def\cprime{$'$}\def\scr{\mathcal}

\end{document}